\documentclass[a4paper,10pt]{amsart}
\usepackage{amsmath,amssymb,amsthm,mathtools}
\usepackage[unicode,colorlinks=true,linkcolor=blue,citecolor=blue,urlcolor=blue]{hyperref}
\usepackage{microtype}

\theoremstyle{plain}
\newtheorem{theorem}{Theorem}

\newtheorem{proposition}{Proposition}

\theoremstyle{definition}

\newcommand{\ZZ}{\mathbb{Z}}
\newcommand{\RR}{\mathbb{R}}

\newcommand{\Ci}{\operatorname{Ci}}

\title{A half-shift reflection identity for the digamma function}
\author{Nikita Kalinin}

\begin{document}

\maketitle

\begin{abstract}
We prove the identity

\[
2W_1(x) + \log 4 + \psi\left(\tfrac{1}{2} + x\right) + \psi\left(\tfrac{3}{2} - x\right) = 0,
\] 
where \(\psi\) is the digamma function and 
\[
W_1(x) = 2\int_0^\infty \Re\left( \frac{y}{(y^2+1)(e^{\pi(y+2ix)} - 1)} \right) dy.
\]
The identity was first conjectured while studying class number $h(D)$ for $D=m^2$ from two complementary perspectives.
Our proof, however, is purely analytic: we compute cosine-series expansions of both sides, expressed in terms of the cosine integral \(\Ci(z)\). 

Using the above identity and M\"obius inversion we find an elementary formula for $$\sum_{\substack{1\le r<m\\ (r,m)=1}} W_1\!\left(\frac{r}{m}\right).$$ 

\noindent\textbf{Keywords:} digamma (polygamma) function; cosine integral (Ci); exponential integral (E1); Fourier cosine series; Möbius inversion

\noindent\textbf{AMS Classification:} 33B15, 33B10, 42A16, 44A10, 11A25
\end{abstract}

\section{Introduction}

The digamma function \(\psi(z)\), defined as the logarithmic derivative of the gamma function \(\Gamma(z)\), appears frequently in number theory and analysis. Identities involving \(\psi\) (see \cite[pp. 952–955]{gradshteyn2014table}) often encode deep arithmetic or analytic properties related to Dirichlet \(L\)-functions or Euler sums \cite{dixit2010laplace}, \cite{bradley1996ramanujan}.

Define
\[
W_1(x) = 2\int_0^\infty \Re\left( \frac{y}{(y^2+1)(e^{\pi(y+2ix)} - 1)} \right) dy.
\]

It is immediate that $W_1(x)$ is $1$-periodic and even. Our main result is  
\begin{theorem}\label{thm:main}
For each real $x$ with $\tfrac12\pm x\notin\{0,-1,-2,\dots\}$, 
\begin{equation}\label{eq:main-id}
2W_1(x) + \log 4 + \psi\left(\tfrac{1}{2} + x\right) + \psi\left(\tfrac{3}{2} - x\right) = 0.
\end{equation}  
\end{theorem}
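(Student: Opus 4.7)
The plan is to prove \eqref{eq:main-id} by computing the Fourier cosine expansions of both sides on the interval $[0,1]$ and matching the coefficients. The function $W_1$ is $1$-periodic and even, while $g(x):=\psi(\tfrac12+x)+\psi(\tfrac32-x)+\log 4$ satisfies the reflection $g(1-x)=g(x)$ (by direct inspection); consequently both sides admit pure cosine expansions $\sum_{n\geq 0}a_n\cos(2\pi n x)$ on $[0,1]$, and it suffices to compare them term by term.

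For $W_1$, I would insert the geometric series $(e^{\pi(y+2ix)}-1)^{-1}=\sum_{n\geq 1}e^{-\pi n(y+2ix)}$, valid for $y>0$, swap the sum with the integral (justified by the rapid decay of the factor $e^{-\pi ny}$), and take real parts to obtain
\[
W_1(x)=2\sum_{n\geq 1}\cos(2\pi n x)\int_0^\infty\frac{y\,e^{-\pi n y}}{y^2+1}\,dy.
\]
The inner integral is a standard Laplace transform: writing $y/(y^2+1)=\Re\,(y+i)^{-1}$ and passing through the exponential integral $E_1$ evaluates it to $-\cos(a)\Ci(a)+\sin(a)\bigl(\tfrac{\pi}{2}-\Si(a)\bigr)$ at $a=\pi n$. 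Since $\sin(\pi n)=0$, this collapses cleanly to $(-1)^{n+1}\Ci(\pi n)$, giving
\[
W_1(x)=2\sum_{n\geq 1}(-1)^{n+1}\Ci(\pi n)\cos(2\pi n x).
\]

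For the digamma side I would compute the cosine coefficients of $g$ directly. The constant contribution vanishes: $\int_0^1\psi(\tfrac12+x)\,dx=\log\Gamma(\tfrac32)-\log\Gamma(\tfrac12)=-\log 2$, the reflected integral contributes another $-\log 2$, and the $\log 4$ term cancels the total. For $n\geq 1$, the symmetry $g(1-x)=g(x)$ folds the two $\psi$ terms into one, yielding $a_n=4\int_0^1\psi(\tfrac12+x)\cos(2\pi nx)\,dx=4(-1)^n\int_{1/2}^{3/2}\psi(y)\cos(2\pi ny)\,dy$ after the change of variable $y=\tfrac12+x$. One integration by parts removes $\psi$ (the boundary term dies because $\sin(\pi n)=\sin(3\pi n)=0$); then substituting the series $\psi'(y)=\sum_{k\geq 0}(k+y)^{-2}$ and rescaling $u=2\pi n(k+y)$ in the $k$th summand converts the result into a telescoping sum $\sum_{k\geq 0}\int_{(2k+1)\pi n}^{(2k+3)\pi n}\sin(u)/u^2\,du=\int_{\pi n}^\infty\sin(u)/u^2\,du$, which one final integration by parts identifies as $-\Ci(\pi n)$. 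The upshot is $a_n=4(-1)^n\Ci(\pi n)$.

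Comparing: the cosine coefficient of $-2W_1(x)$ is $-4(-1)^{n+1}\Ci(\pi n)=4(-1)^n\Ci(\pi n)$, exactly matching $a_n$, and the constant terms both vanish. Since $\Ci(\pi n)=O(n^{-2})$ makes both series absolutely convergent, both sides are continuous on $[0,1]$, and uniqueness of Fourier expansions forces pointwise equality there. The main obstacle is the telescoping step in the digamma calculation: the rescaling $u=2\pi n(k+y)$ must produce intervals that join consecutively to cover $(\pi n,\infty)$, and the tail integral of $\sin(u)/u^2$ must be identified with $-\Ci(\pi n)$; everything else reduces to bookkeeping, and the appearance of the same $\Ci(\pi n)$ on both sides is precisely what makes the identity clean.
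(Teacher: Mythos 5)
Your proposal is correct and follows essentially the same route as the paper: both sides are expanded in cosine series on $[0,1]$, the $W_1$ coefficients are obtained identically via the Laplace transform and $E_1(i\pi n)$, and the coefficients are matched as $4(-1)^n\Ci(\pi n)$. The only (cosmetic) difference is on the digamma side, where you integrate by parts against $\psi'(y)=\sum_{k\ge0}(k+y)^{-2}$ and telescope, while the paper telescopes the partial-fraction series for $\psi(1\pm u)$ directly; note that, like the paper's argument, this genuinely establishes the identity only on $[0,1]$ (which is in fact all that can hold, since $W_1$ is $1$-periodic while $\psi(\tfrac12+x)+\psi(\tfrac32-x)$ is not).
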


In particular, \eqref{eq:main-id} holds for all rational numbers \(x = r/m \in (0,1)\).

The function \(W_1(x)\) arises naturally in the context of topographs for binary quadratic forms and is a part of Theorem 9.4 and formula (9.29) about the class number formula for a square discriminant in \cite{o2024topographs}. Using other approach to compute the same number via telescoping the author expressed $2W_1(x)$ via $ \psi\left(\tfrac{1}{2} + x\right) + \psi\left(\tfrac{3}{2} - x\right)$ plus some other terms \cite{kalinintopo}. While computing examples, it was observed that all the other terms collapse to $\log 4$ without any apparent reason. However the identity \eqref{eq:main-id} can be verified numerically. It seems difficult to prove the above identity using topographs, so we prove it independently, by purely analytic methods.
 
The integral representation of \(W_1\) suggests a connection with Laplace transforms and periodic functions, which we exploit here via Fourier analysis.

Our method consists of expanding both sides of \eqref{eq:main-id} in Fourier cosine series. The digamma sum is expanded using classical identities, while \(W_1(x)\) is expanded term-wise via integration techniques involving the exponential integral \(E_1(z)\) and the cosine integral \(\Ci(z)\). The matching of coefficients yields the result.

This identity is reminiscent of known formulas for digamma functions at rational arguments, but the appearance of the integral \(W_1(x)\) and its explicit Fourier expansion seem to be new. The result may also be interpreted as a kind of functional equation for a certain Laplace-type transform of the periodic digamma function.

We also compute the reduced--residue sum of $W_1$ appearing in the class--number identity~(9.29) of \cite{o2024topographs}.

\begin{proposition}\label{prop:main}
Let \(m>1\). Then
\begin{equation}\label{eq:SW1-closed}
\sum_{\substack{1\le r<m\\ (r,m)=1}} W_1\!\left(\frac{r}{m}\right)
\;=\; \varphi(m)\log\!\Big(\frac{m}{2}\Big)
\;+\; \varphi(m)\sum_{p\mid m}\frac{\log p}{p-1}
\;-\; m\sum_{d\mid m}\frac{\mu(d)}{d}\,\psi\!\left(\frac{m}{2d}\right)\, .
\end{equation}
\end{proposition}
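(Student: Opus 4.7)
The plan is to reduce the sum to a Möbius inversion computation by invoking Theorem~\ref{thm:main}. Rearranging~\eqref{eq:main-id} gives $W_1(x) = -\log 2 - \tfrac12\bigl[\psi(\tfrac12+x) + \psi(\tfrac32-x)\bigr]$. The involution $r \mapsto m-r$ on reduced residues $\bmod\, m$ preserves coprimality and converts $\psi(\tfrac32 - r/m)$ into $\psi(\tfrac12 + r/m)$, so summing collapses to
\[
\sum_{\substack{1\le r<m\\(r,m)=1}} W_1(r/m) = -\varphi(m)\log 2 - f(m),\qquad f(m) := \sum_{\substack{1\le r<m\\(r,m)=1}}\psi\!\left(\tfrac12 + \tfrac{r}{m}\right),
\]
and Proposition~\ref{prop:main} is reduced to an explicit evaluation of $f(m)$.

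To evaluate $f$, I would group $r \in \{1,\dots,m-1\}$ by the divisor $e = m/\gcd(r,m) > 1$ to get the identity $\sum_{r=1}^{m-1}\psi(\tfrac12 + r/m) = \sum_{e\mid m,\,e>1} f(e)$, and apply Gauss's multiplication theorem at $z = \tfrac12$ to evaluate the full sum (including $r=0$) as $m\psi(m/2) - m\log m$. Setting $F(m) := m\psi(m/2) - m\log m - \psi(\tfrac12)$ and $f(1) := 0$, one checks $\sum_{d\mid m} f(d) = F(m)$, so Möbius inversion and the substitution $d \mapsto m/d$ yield
\[
f(m) = m\sum_{d\mid m}\frac{\mu(d)}{d}\,\psi\!\left(\frac{m}{2d}\right) - m\sum_{d\mid m}\frac{\mu(d)}{d}\log\!\frac{m}{d}
\]
for $m > 1$ (the constant contribution $-\psi(\tfrac12)\sum_{d\mid m}\mu(m/d)$ vanishes).

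Finally, I would expand the logarithmic sum via the standard identities $\sum_{d\mid m}\mu(d)/d = \varphi(m)/m$ and $\sum_{d\mid m}\mu(d)\log(d)/d = -(\varphi(m)/m)\sum_{p\mid m}\log p/(p-1)$, the second obtained by logarithmically differentiating the Euler product $\prod_{p\mid m}(1-p^{-s}) = \sum_{d\mid m}\mu(d)d^{-s}$ at $s=1$ and using $P'(1)/P(1) = \sum_{p\mid m}\log p/(p-1)$. Substituting into $-\varphi(m)\log 2 - f(m)$ and combining $-\varphi(m)\log 2 + \varphi(m)\log m = \varphi(m)\log(m/2)$ then produces~\eqref{eq:SW1-closed}. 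The only mildly delicate step is the logarithmic-derivative identity for $\sum_{d\mid m}\mu(d)\log(d)/d$; everything else is bookkeeping with divisor sums.
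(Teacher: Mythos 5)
Your proposal is correct and follows essentially the same route as the paper's proof: the same reduction via Theorem~\ref{thm:main} and the involution $r\mapsto m-r$, the same Gauss multiplication step at $z=\tfrac12$, the same grouping by $\gcd$ followed by M\"obius inversion (with the same observation that the $\psi(\tfrac12)$ contribution dies because $\sum_{d\mid m}\mu(d)=0$ for $m>1$), and the same two divisor-sum identities at the end. The only difference is cosmetic: you supply an Euler-product derivation of $\sum_{d\mid m}\mu(d)\log(d)/d$ where the paper just cites it as standard.
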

Here \(\varphi\) denotes Euler’s totient function and \(\mu\) is the Möbius function; the sum \(\sum_{p\mid m}\) runs over primes dividing \(m\).

In view of \eqref{eq:main-id}, it would be good to further simplify $W_1(\frac{r}{m})+W_1(\frac{s}{m})$ for $1\leq r,s,<m, rs\equiv 1 \pmod m$, since such expression appear in \cite{o2024topographs} but it seems that it is beyong our current understanding of sums like $\psi(\frac{r}{m})+\psi(\frac{s}{m})$. We mention what is known: $\psi(r/m)+\gamma$ is transcendental \cite{murty2007transcendental}, and we have Gauss' formula \cite{gaub1812disquisitiones},\cite{hashimoto2008gauss},\cite{kolbig1996polygamma}:

$$\psi(\frac{r}{m})=-\gamma-2m-\frac{\pi}{2}\cot(\frac{\pi r}{m})+2\sum_{j=1}^{m/2}\left(\cos\frac{2\pi rj}{m}\right)\log\sin\frac{\pi j}{m},$$

and also

$$\psi(\frac{r}{m})=-\gamma - \log(m)+\sum_{j=1}^{m-1}e^{-2\pi i j r/m}\log(1-e^{2\pi i j/m}).$$

The article \cite{o2024topographs} also mentions a function 
\[
W_2(x) = 2\int_0^\infty \Re\left( \frac{y(3y^4+5y^2+6)}{(y^2+1)(e^{\pi(y+2ix)} - 1)} \right) dy.
\]

So far we do not know how to relate it to any other function since it is not clear how to employ the second approach (with telescoping) in this case.

{\bf Outline. } In Section~\ref{cos} we find the cosine Fourier expansion of both sides in \eqref{eq:main-id}. In Section~\ref{proofs} we prove Theorem~\ref{thm:main} and Proposition~\ref{prop:main}.

\section{Cosine series}\label{cos}

We begin by deriving a cosine series for $f(x)=\psi(\tfrac12+x)+\psi(\tfrac32-x)$. Set
\[
f(x)=\psi(1+u)+\psi(1-u), 0<x<1, u=x-\tfrac12\in(-\tfrac12,\tfrac12)
\]
Then
\[
\int_0^1 f(x)\,dx
=\int_{-1/2}^{1/2}\bigl(\psi(1+u)+\psi(1-u)\bigr)\,du
=2\log\!\left(\frac{\Gamma(3/2)}{\Gamma(1/2)}\right)=-2\log 2,
\]
hence the constant Fourier coefficient is $a_0/2=-2\log 2$.

Since $f$ is an even function, we need to find

\[
a_k
= 2\!\int_{0}^{1}\!f(x)\cos(2\pi kx)\,dx
= 2(-1)^k \!\!\int_{-1/2}^{1/2}(\psi(1+ u)+\psi(1- u))\cos(2\pi ku)\,du .
\]

Using the classical expansion
\[
\psi(1+ u)=-\gamma+ \sum_{n\ge1}\left(\frac{1}{n}-\frac{1}{n+u}\right)
\]
we obtain
\[
\psi(1+ u)+\psi(1- u)= -2\gamma -
      \sum_{n=1}^{\infty}\left(\frac{2}{n}-\frac{1}{n-u}-\frac{1}{n+u}\right).
\]

We obtain
\[
\int_{-1/2}^{1/2}\cos(2\pi ku)\left(
  \frac{1}{n-u}+\frac{1}{n+u}\right)\,du
= 2
  \int_{\,n-1/2}^{\,n+1/2}\!
       \frac{\cos(2\pi k t)}{t}\,dt
\quad (t:=n\pm u).
\]

Hence
\[
a_k
= -4(-1)^k
  \int_{1/2}^{\infty}
        \frac{\cos(2\pi k t)}{t}\,dt .
\]

Let \(s=2\pi k t\).  Then \(dt/t = ds/s\):

\[
a_k
= -4(-1)^k
  \int_{\pi k}^{\infty}
        \frac{\cos s}{s}\,ds
= 4(-1)^k\,\Ci(\pi k),
\qquad k\ge 1,
\]
where
\[
\Ci(z) := -\!\int_{z}^{\infty}\frac{\cos t}{t}\,dt .
\]

Because \(f\) is even about \(x=\tfrac12\), all sine-coefficients vanish.  
Collecting the constant term and the cosine coefficients:

\[
\psi\!\Bigl(\tfrac12+x\Bigr)+\psi\!\Bigl(\tfrac32-x\Bigr)
\;=\;
-2\log 2
\;+\;
4\sum_{k=1}^{\infty}
  (-1)^k\,\Ci(\pi k)\,
  \cos(2\pi k x)
,\qquad 0<x<1 .
\]

\subsection{Cosine series for $W_1(x)$}

Let us write a cosine series for $W_1(x)$. Expanding

\[
(e^{\pi(y+2ix)}-1)^{-1}=\sum_{n\ge1}e^{-\pi n y}e^{-2\pi i n x}
\]
for $y>0$ and taking real parts yields
\[
W_1(x)=\sum_{k=1}^\infty a_k\,\cos(2\pi k x),
\qquad
a_k:=2\int_0^\infty \frac{y}{y^2+1}\,e^{-\pi k y}\,dy.
\]

Let us evaluate the cosine coefficients. Write
\(
\frac{y}{y^2+1}=\frac12\bigl(\frac{1}{y-i}+\frac{1}{y+i}\bigr)
\).
For $a>0$ the Laplace–Euler integral gives
\[
\int_0^\infty \frac{e^{-a y}}{y\pm i}\,dy
=e^{\pm i a}\,E_1(\pm i a),
\qquad
E_1(z):=\int_z^\infty \frac{e^{-t}}{t}\,dt,
\]
so with $a=\pi k$,
\[
a_k
=2\Re\!\Bigl(e^{i\pi k}E_1(i\pi k)\Bigr).
\]
For $x>0$ one has
\[
E_1(ix)=-\Ci(x)\;-\;i\bigl(\operatorname{Si}(x)-\tfrac{\pi}{2}\bigr)
\quad\Rightarrow\quad
\Re\,E_1(ix)=-\Ci(x),
\]
whence
\begin{equation}\label{eq:ak}
\quad a_k=2\,(-1)^{k+1}\,\Ci(\pi k),\qquad k\ge1.
\end{equation}
Since $\Ci(\pi k)=O(k^{-2})$, the cosine series for $W_1$ and for $f$ converge absolutely and uniformly on $[0,1]$.

\section{Proofs}\label{proofs}

\begin{proof}[Proof of Theorem~\ref{thm:main}]
By \eqref{eq:ak} and \S2 we have, for $0<x<1$,
\[
2\,W_1(x)
=2\sum_{k\ge1} a_k\cos(2\pi kx)
=4\sum_{k\ge1}(-1)^{k+1}\Ci(\pi k)\cos(2\pi kx),
\]
while
\[
-\log 4 - f(x)
= -\log 4 + 2\log 2
-4\sum_{k\ge1}(-1)^k\Ci(\pi k)\cos(2\pi kx)=\]
\[=4\sum_{k\ge1}(-1)^{k+1}\Ci(\pi k)\cos(2\pi kx).
\]
Thus $2W_1(x)=-\log 4 - f(x)$ for $0<x<1$. Both sides extend to a continuous $1$-periodic function on $\mathbb{R}$ (away from the poles of $\psi$), hence the identity holds for all such $x$, proving the theorem.
\end{proof}
\medskip

\begin{proof}[Proof of Proposition~\ref{prop:main}] We closely follow Lehmer’s approach \cite{lehmer1975euler} below.

We will repeatedly use the Gauss multiplication theorem for \(\psi\):
\begin{equation}\label{eq:psi-mult}
\sum_{j=0}^{m-1} \psi\!\left(z+\frac{j}{m}\right) \;=\; m\,\psi(mz) - m\log m,
\qquad z\in\RR,\quad m\in\ZZ_{\ge 1}.
\end{equation}

Because the map \(r\mapsto m-r\) permutes the reduced residue system modulo \(m\), we have
\begin{equation}\label{eq:symmetry}
\sum_{\substack{1\le r<m\\(r,m)=1}}\psi\!\left(\tfrac32-\frac{r}{m}\right)
\;=\;
\sum_{\substack{1\le r<m\\(r,m)=1}}\psi\!\left(\tfrac12+\frac{r}{m}\right).
\end{equation}
Hence, from \eqref{eq:symmetry},
\begin{equation}\label{eq:SW1-step1}
\sum_{\substack{1\le r<m\\ (r,m)=1}} W_1\!\left(\frac{r}{m}\right)
= -\frac12\Big(\varphi(m)\log 4 + 2\!\!\sum_{\substack{1\le r<m\\(r,m)=1}}\psi\!\left(\tfrac12+\frac{r}{m}\right)\!\Big)
= -\varphi(m)\log 2 - \sum_{\substack{1\le r<m\\(r,m)=1}}\psi\!\left(\tfrac12+\frac{r}{m}\right).
\end{equation}

Define the ``full'' (non-primitive) sum
\begin{equation}\label{eq:T12-def}
T(m) \;:=\; \sum_{j=1}^{m-1}\psi\!\left(\tfrac12+\frac{j}{m}\right).
\end{equation}
From \eqref{eq:psi-mult} with \(z=\tfrac12\) and subtracting the \(j=0\) term, we get
\begin{equation}\label{eq:T12-eval}
T(m)
\;=\; m\,\psi\!\left(\frac{m}{2}\right) - m\log m - \psi\!\left(\tfrac12\right).
\end{equation}

Group the terms in \eqref{eq:T12-def} by the common divisor of \(j\) and \(m\): for each divisor \(n\mid m\) with \(n>1\) we set
\begin{equation}\label{eq:S12star-def}
T^*(n) \;:=\; \sum_{\substack{1\le r<n\\(r,n)=1}}\psi\!\left(\tfrac12+\frac{r}{n}\right),
\end{equation}
so that
\begin{equation}\label{eq:T12-group}
T(m) \;=\; \sum_{\substack{n\mid m\\ n>1}} T^*(n).
\end{equation}
By M\"obius inversion (and using \(\sum_{d\mid m}\mu(d)=0\) for \(m>1\)) we obtain
\begin{equation}\label{eq:S12star-mobius}
T^*(m)
\;=\; \sum_{d\mid m}\mu(d)\,T\!\Big(\frac{m}{d}\Big)
\;=\; m\sum_{d\mid m}\frac{\mu(d)}{d}\,\psi\!\left(\frac{m}{2d}\right)
\;-\; m\sum_{d\mid m}\frac{\mu(d)}{d}\log\!\Big(\frac{m}{d}\Big).
\end{equation}

Two standard divisor-sum identities (multiplicativity with a check on prime powers) are
\begin{equation}\label{eq:divisor-identities}
\sum_{d\mid m}\frac{\mu(d)}{d} \;=\; \frac{\varphi(m)}{m},
\qquad
\sum_{d\mid m}\frac{\mu(d)}{d}\log d \;=\; -\frac{\varphi(m)}{m}\sum_{p\mid m}\frac{\log p}{p-1},
\end{equation}
whence
\begin{equation}\label{eq:log-sum}
m\sum_{d\mid m}\frac{\mu(d)}{d}\log\!\Big(\frac{m}{d}\Big)
\;=\; \varphi(m)\log m + \varphi(m)\sum_{p\mid m}\frac{\log p}{p-1}.
\end{equation}
Combining \eqref{eq:SW1-step1}, \eqref{eq:S12star-mobius}, and \eqref{eq:log-sum} yields
\begin{align*}
\sum_{\substack{1\le r<m\\ (r,m)=1}} W_1\!\left(\frac{r}{m}\right)
&= -\varphi(m)\log 2 - T^*(m)\\
&= -\varphi(m)\log 2 - \Big[\,m\sum_{d\mid m}\frac{\mu(d)}{d}\,\psi\!\left(\frac{m}{2d}\right) - \varphi(m)\log m - \varphi(m)\sum_{p\mid m}\frac{\log p}{p-1}\,\Big]\\
&= \varphi(m)\log\!\Big(\frac{m}{2}\Big) + \varphi(m)\sum_{p\mid m}\frac{\log p}{p-1}
\;-\; m\sum_{d\mid m}\frac{\mu(d)}{d}\,\psi\!\left(\frac{m}{2d}\right),
\end{align*}
which is precisely \eqref{eq:SW1-closed}.
\end{proof}

\section{Data availability statement}
No data was used.
\bibliography{../bibliography.bib}

\begin{thebibliography}{10}

\bibitem{bradley1996ramanujan}
D.~Bradley.
\newblock Ramanujan's formula for the logarithmic derivative of the gamma
  function.
\newblock In {\em Mathematical Proceedings of the Cambridge Philosophical
  Society}, volume 120, pages 391--401. Cambridge University Press, 1996.

\bibitem{dixit2010laplace}
A.~Dixit.
\newblock The laplace transform of the psi function.
\newblock {\em Proceedings of the American Mathematical Society},
  138(2):593--603, 2010.

\bibitem{gaub1812disquisitiones}
C.~Gauss.
\newblock Disquisitiones generales circa seriem infinitam.
\newblock {\em Comment. Gottingen}, 2:1--46, 1813.

\bibitem{gradshteyn2014table}
I.~S. Gradshteyn and I.~M. Ryzhik.
\newblock {\em Table of integrals, series, and products}.
\newblock Academic press, 2014.

\bibitem{hashimoto2008gauss}
M.~Hashimoto, S.~Kanemitsu, and M.~Toda.
\newblock On {G}auss' formula for $\psi$ and finite expressions for the
  {L}-series at 1.
\newblock {\em Journal of the Mathematical Society of Japan}, 60(1):219--236,
  2008.

\bibitem{kalinintopo}
N.~Kalinin.
\newblock Evaluation of lattice sums via telescoping over topographs.
\newblock (in preparation).

\bibitem{kolbig1996polygamma}
K.~S. K{\"o}lbig.
\newblock The polygamma function and the derivatives of the cotangent function
  for rational arguments.
\newblock Technical report, 1996.

\bibitem{lehmer1975euler}
D.~Lehmer.
\newblock Euler constants for arithmetic progressions.
\newblock {\em Acta Arithmetica}, 27:125--142, 1975.

\bibitem{murty2007transcendental}
M.~R. Murty and N.~Saradha.
\newblock Transcendental values of the digamma function.
\newblock {\em Journal of Number Theory}, 125(2):298--318, 2007.

\bibitem{o2024topographs}
C.~O'Sullivan.
\newblock Topographs for binary quadratic forms and class numbers.
\newblock {\em arXiv preprint arXiv:2408.14405}, 2024.

\end{thebibliography}
\bibliographystyle{abbrv}
\end{document}